\begin{document} 
\newcommand{\ci}[1]{_{ {}_{\scriptstyle #1}}}

\newcommand{\norm}[1]{\ensuremath{\left\|#1\right\|}} \newcommand{\abs}[1]{\ensuremath{\left\vert#1\right\vert}}
\newcommand{\ip}[2]{\ensuremath{\left\langle#1,#2\right\rangle}} \newcommand{\p}{\ensuremath{\partial}}
\newcommand{\pr}{\mathcal{P}}

\newcommand{\pbar}{\ensuremath{\bar{\partial}}} \newcommand{\db}{\overline\partial} \newcommand{\D}{\mathbb{D}}
\newcommand{\B}{\mathbb{B}}   \newcommand{\Sn}{{\mathbb{S}_n}} \newcommand{\T}{\mathbb{T}} \newcommand{\R}{\mathbb{R}}
\newcommand{\Z}{\mathbb{Z}} \newcommand{\C}{\mathbb{C}} \newcommand{\N}{\mathbb{N}} 

\newcommand{\td}{\widetilde\Delta}

\newcommand{\Aa}{\mathcal{A}} \newcommand{\BB}{\mathcal{B}} \newcommand{\HH}{\mathcal{H}} \newcommand{\KK}{\mathcal{K}} \newcommand{\DD}{\mathcal{D}}
\newcommand{\LL}{\mathcal{L}} \newcommand{\MM}{\mathcal{M}}  \newcommand{\FF}{\mathcal{F}}  \newcommand{\GG}{\mathcal{G}}  \newcommand{\TT}{\mathcal{T}}
 \newcommand{\UU}{\mathcal{U}}

\newcommand{\AapBerg}{\Aa_p(\B_n)} \newcommand{\AaFock}{\Aa_\phi (\C^n)} \newcommand{\AatwoBerg}{\Aa_2(\B_n)}
\newcommand{\Om}{\Omega} \newcommand{\La}{\Lambda} \newcommand{\AaordFock}{\Aa (\C^n)}

\newcommand{\rk}{\operatorname{rk}} \newcommand{\card}{\operatorname{card}} \newcommand{\ran}{\operatorname{Ran}}
\newcommand{\osc}{\operatorname{OSC}} \newcommand{\im}{\operatorname{Im}} \newcommand{\re}{\operatorname{Re}}
\newcommand{\tr}{\operatorname{tr}} \newcommand{\vf}{\varphi} \newcommand{\f}[2]{\ensuremath{\frac{#1}{#2}}}

\newcommand{\kzp}{k_z^{(p,\alpha)}} \newcommand{\klp}{k_{\lambda_i}^{(p,\alpha)}} \newcommand{\TTp}{\mathcal{T}_p}

\newcommand{\vp}{\varphi} \newcommand{\al}{\alpha} \newcommand{\be}{\beta} \newcommand{\la}{\lambda}
\newcommand{\tf}{\tilde{f}}
\newcommand{\li}{\lambda_i} \newcommand{\lb}{\lambda_{\beta}} \newcommand{\Bo}{\mathcal{B}(\Omega)}
\newcommand{\Bbp}{\mathcal{B}_{\beta}^{p}} \newcommand{\Bbt}{\mathcal{B}(\Omega)} \newcommand{\Lbt}{L_{\beta}^{2}}
\newcommand{\Kz}{K_z} \newcommand{\kz}{k_z} \newcommand{\Kl}{K_{\lambda_i}} \newcommand{\kl}{k_{\lambda_i}}
\newcommand{\Kw}{K_w} \newcommand{\kw}{k_w} \newcommand{\Kbz}{K_z} \newcommand{\Kbl}{K_{\lambda_i}}
\newcommand{\kbz}{k_z} \newcommand{\kbl}{k_{\lambda_i}} \newcommand{\Kbw}{K_w} \newcommand{\kbw}{k_w}
\newcommand{\BL}{\mathcal{L}\left(\mathcal{B}(\Omega), L^2(\Om;d\sigma)\right)}
\newcommand{\Fpphi}{\ensuremath{{\mathcal{F}}_\phi ^p }}
\newcommand{\Ftwophi}{\ensuremath{{\mathcal{F}}_\phi ^2 }}
\newcommand{\incn}{\ensuremath{\int_{\C}}}
\newcommand{\Finfphi}{\ensuremath{\mathcal{F}_\phi ^\infty }}
\newcommand{\Fp}{\ensuremath{\mathcal{F} ^p }} \newcommand{\Fq}{\ensuremath{\mathcal{F} ^q }}
\newcommand{\Ft}{\ensuremath{\mathcal{F} ^2 }} \newcommand{\Lt}{\ensuremath{L ^2 }}
\newcommand{\Lp}{\ensuremath{L ^p }}
\newcommand{\Fonephi}{\ensuremath{\mathcal{F}_\phi ^1 }}
\newcommand{\Lpphi}{\ensuremath{L_\phi ^p}}
\newcommand{\Ltwophi}{\ensuremath{L_\phi ^2}}
\newcommand{\Lonephi}{\ensuremath{L_\phi ^1}}
\newcommand{\af}{\mathfrak{a}} \newcommand{\bb}{\mathfrak{b}} \newcommand{\cc}{\mathfrak{c}}
\newcommand{\Fqphi}{\ensuremath{\mathcal{F}_\phi ^q }}

\newcommand{\supp}{\operatorname{supp}}
\newcommand{\spn}{\operatorname{span}}
\newcommand{\llan}{\left\langle}
\newcommand{\rran}{\right\rangle}
\newcommand{\llv}{\left\lvert}
\newcommand{\rrv}{\right\rvert}
\newcommand{\llV}{\left\lVert}
\newcommand{\rrV}{\right\rVert}
\newcommand{\tld}{\widetilde}


\newcommand{\entrylabel}[1]{\mbox{#1}\hfill}

\newenvironment{entry} {\begin{list}{X}%
  {\renewcommand{\makelabel}{\entrylabel}%
      \setlength{\labelwidth}{55pt}%
      \setlength{\leftmargin}{\labelwidth}
      \addtolength{\leftmargin}{\labelsep}%
   }%
}


\numberwithin{equation}{section}

\newtheorem{thm}{Theorem}[section] \newtheorem{lem}[thm]{Lemma} \newtheorem{cor}[thm]{Corollary}
\newtheorem{conj}[thm]{Conjecture} \newtheorem{prob}[thm]{Problem} \newtheorem{prop}[thm]{Proposition}
\newtheorem*{prop*}{Proposition}

\theoremstyle{remark} \newtheorem{rem}[thm]{Remark} \newtheorem*{rem*}{Remark} \newtheorem{example}[thm]{Example}

\theoremstyle{definition} \newtheorem{definition}[thm]{Definition}

\title{Density results for continuous frames}

\author[M. Mitkovski]{Mishko Mitkovski$^\dagger$} \address{Mishko Mitkovski, Department of Mathematical Sciences\\
Clemson University\\ O-110 Martin Hall, Box 340975\\ Clemson, SC USA 29634} \email{mmitkov@clemson.edu} \thanks{$\dagger$ Research supported in part by National Science Foundation
DMS grant \# 1600874.}

\author[A. Ramirez]{Aaron Ramirez} \address{Aaron Ramirez, Department of Mathematical Sciences\\
Clemson University\\ O-110 Martin Hall, Box 340975\\ Clemson, SC USA 29634}

\subjclass[2000]{42B35, 43A22, 47B35, 47B38} \keywords{Frame, Continous frame, Interpolation, Sampling, 
Beurling densities}

\begin{abstract}  
We derive necessary conditions for localization of continuous frames in terms of generalized Beurling densities. As an important application we provide necessary density conditions for sampling and interpolation in a very large class of reproducing kernel Hilbert spaces. 

\end{abstract}

\maketitle

\section{Introduction}

A well-known elementary linear algebra fact says that any linear independent set of vectors in a finite-dimensional vector space cannot have more elements than any spanning set. In particular, the cardinality of any Riesz sequence cannot be greater than the cardinality of any frame. Even though there is no exact analog of these results in the infinite dimensional setting there are many well-known results which are very similar in spirit. In this infinite-dimensional setting one needs to replace the comparison of cardinalities with a more suitable concept - which is the concept of densities. Basically one needs to compare the cardinalities locally everywhere and then take the appropriate limits. First density results were obtained in the context of non-harmonic complex exponentials. The first definitive results were proved by Beurling~\cite{Beu} and Kahane~\cite{Kah} who characterized frames and Riesz sequences of complex exponentials in terms of certain natural densities of their frequency 
sequence. These densities are now known as Beurling densities. These results were later extended and generalized in various ways and to many different settings~\cites{AB, Bar, Lan, HNP, OS, RS, Seip, Seip1, Seip2, SW}. The most important and popular approaches for proving the necessary part of density theorems are due to Landau~\cite{Lan}, Ramanthan and Steger~\cite{RS}, Balan et al~\cite{BCHL1, BCHL2}, and the recent one of Nitzan and Olevski~\cite{NO}. 

Most density results for sampling and interpolation pertain to a specific reproducing kernel Hilbert space (RKHS). Very recently two universal results appeared~\cites{Fuhr, Aaron} providing a necessary density theorem for a very general class of reproducing kernel Hilbert spaces. Both of these results use similar (but non-equivalent) set of assumption to deduce essentially same conclusions. In this paper we provide a universal density theorem that implies both of these results. Our result can be viewed as a starting point for many density theorems.

\section{Preliminaries}

Let $\HH$ be a Hilbert space. A collection of vectors $\left\{ f_x  \right\}_{x\in \left( X,d,\mu\right)} \subseteq \mathcal{H}$ indexed by a metric measure space $(X, d, \mu)$ (with metric $d$ and a Borel measure $\mu$) is called a \emph{continuous frame} if there exist $0<c\leq C<\infty$ such that 
\[ c\norm{f}^2\leq \int_X\abs{\ip{f}{f_x}}^2d\mu(x)\leq C\norm{f}^2,\] for all $f\in \HH$. 

A continuous frame is said to be a \emph{continuous Parseval frame} in the case when $c=C=1$. 
The name continuous frames is used to stress the analogy with the usual (discrete) frames. Namely, for $X=\N$ (with the usual metric and the counting measure) this definition reduces to the usual definition of frames. Even though the concept of continuous frames has been around for quite some time now, see, e.g.,~\cite{AAG, FR}, so far there is no established standard terminology and other names for the same notion can be found in the literature, e.g., ``continuous resolution of the identity'', ``generalized coherent states'', etc. 

As in the discrete case one can define a frame operator $S:\HH\to \HH$ by $Sf=\int_X\ip{f}{f_x}f_xd\mu(x)$. Here and elsewhere the integral of a Hilbert space-valued function will be defined in the weak sense. For example, $Sf$ is the unique element in $\HH$ such that $$\ip{Sf}{g}=\int_X \ip{f}{f_x}\ip{f_x}{g}d\mu(x),$$ for all $g\in\HH$. The existence and uniqueness of this element is guaranteed by the Riesz representation theorem. The canonical dual continuous frame is defined by $\tf_x=S^{-1}f_x, x\in X$. It is easy to see that Parseval continuous frames  coincide with their duals, i.e., $f_x=\tilde{f}_x$.

For the reader's convenience, in the following lemma, we have collected few simple preliminary facts that will be used throughout the paper. The proofs are straightforward, so we omit them.

\begin{lem}\label{easy} Let $\FF, \GG\subseteq \HH$ be closed subspaces of a Hilbert space $\HH$. 
Let $\left\{ f_x  \right\}_{x\in \left( X,d,\mu\right)} \subseteq \mathcal{F}$ be a continuous frame for $\mathcal{F}$, and $\left\{ g_x  \right\}_{x\in \left( X,d,\nu\right)} \subseteq \mathcal{G} $ be a continuous frame for $\mathcal{G}$.

\begin{itemize}
\item[(i)] The following formula holds for the orthogonal projection $P_\FF:\HH\to\FF$ onto $\FF$, $$\displaystyle P_{\mathcal{F}}f = \int_X \llan f,\tld{f_x} \rran f_x d\mu(x) = \int_X \llan f,f_x \rran \tld{f_x} d\mu(x),$$ for any $f\in\HH$. 

\item[(ii)] If $a\leq \llan P_{\mathcal{G}} \tld{f_y},f_y \rran \leq b$ for all $y \in \supp{\mu}$, then $$\displaystyle a\mu\left(\Omega\right) \leq \int_X\int_\Omega \left\langle g_x,f_y \right\rangle \left\langle \widetilde{f_y}, \widetilde{g_x} \right\rangle d\mu(y) d\nu(x)\leq b\mu(\Omega),$$ for any Borel set $\Om\subseteq X$.\\ The same inequalities hold under the assumption $a\leq \llan P_{\mathcal{G}} f_y, \tld{f_y}\rran \leq b$ for all $y \in \supp{\mu}$

\end{itemize}

\end{lem}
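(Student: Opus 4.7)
The plan for part (i) is to exploit the frame operator of $\{f_x\}$ restricted to $\FF$. Since $\{f_x\}\subseteq\FF$ is a continuous frame for $\FF$ with bounds $c\le C$, the weakly defined operator $S_\FF f = \int_X\langle f,f_x\rangle f_x\,d\mu(x)$ is bounded, self-adjoint and positive on $\FF$ with $cI_\FF\le S_\FF\le CI_\FF$, hence invertible on $\FF$, and $\tilde f_x = S_\FF^{-1}f_x\in\FF$. For $f\in\HH$ I would observe that $P_\FF f\in\FF$ and, because $f_x\in\FF$, $\langle f,f_x\rangle=\langle P_\FF f,f_x\rangle$. Therefore $S_\FF P_\FF f=\int_X\langle f,f_x\rangle f_x\,d\mu(x)$, and applying $S_\FF^{-1}$ (pushing it into the weak integral through duality pairings against $g\in\HH$, which only involves the bounded linear functional $h\mapsto\langle S_\FF^{-1}h,g\rangle$ on $\FF$) produces the first identity $P_\FF f=\int_X\langle f,f_x\rangle\tilde f_x\,d\mu(x)$. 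The second identity comes from writing $P_\FF f=S_\FF(S_\FF^{-1}P_\FF f)$ and using self-adjointness of $S_\FF^{-1}$ on $\FF$ together with $S_\FF^{-1}f_x\in\FF$ to convert $\langle S_\FF^{-1}P_\FF f,f_x\rangle$ into $\langle f,\tilde f_x\rangle$.

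For part (ii), the plan is to apply (i) to the frame $\{g_x\}$ for $\GG$. This gives, for any $h_1,h_2\in\HH$,
\[
\langle P_\GG h_1,h_2\rangle=\int_X\langle h_1,\tilde g_x\rangle\langle g_x,h_2\rangle\,d\nu(x).
\]
Setting $h_1=\tilde f_y$ and $h_2=f_y$ identifies the inner integrand in the statement as a pointwise expression:
\[
\int_X\langle g_x,f_y\rangle\langle\tilde f_y,\tilde g_x\rangle\,d\nu(x)=\langle P_\GG\tilde f_y,f_y\rangle.
\]
Then Fubini, justified by the frame-bound Cauchy--Schwarz estimates $\int_X|\langle g_x,h\rangle|^2d\nu(x)\le C\|h\|^2$ applied to $h=f_y$ and to $h=\tilde f_y$, exchanges the order of integration and turns the double integral into $\int_\Omega\langle P_\GG\tilde f_y,f_y\rangle\,d\mu(y)$. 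Integrating the hypothesized pointwise bounds $a\le\langle P_\GG\tilde f_y,f_y\rangle\le b$ over $\Omega$ gives the conclusion. For the alternative hypothesis, note $\langle P_\GG f_y,\tilde f_y\rangle=\overline{\langle P_\GG\tilde f_y,f_y\rangle}$, and the assumption that this complex conjugate is sandwiched between real numbers $a$ and $b$ forces both quantities to be equal real numbers, so the same argument applies verbatim.

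The main obstacle, which is really only a bookkeeping matter, is formalizing the interchange of the bounded operator $S_\FF^{-1}$ with the weak integral in (i) and the Fubini step in (ii) in the weak integral framework laid out after the definition of the continuous frame. Both reductions are routine once everything is interpreted as a scalar identity obtained by pairing against an arbitrary $g\in\HH$, which is presumably why the authors label the lemma ``easy.''
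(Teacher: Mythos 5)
The paper gives no argument to compare against (it explicitly omits the proofs of this lemma as ``straightforward''), so I can only judge your reconstruction on its own terms: it is the natural argument and is essentially correct. Part (i) is the standard frame-operator computation ($S_\FF$ bounded, positive and invertible on $\FF$, $\langle f,f_x\rangle=\langle P_\FF f,f_x\rangle$ because $f_x\in\FF$, then push $S_\FF^{-1}$ through the weak integral by pairing against $g$), part (ii) is (i) applied to $\{g_x\}$ to get the pointwise identity $\int_X\langle g_x,f_y\rangle\langle\widetilde{f_y},\widetilde{g_x}\rangle\,d\nu(x)=\langle P_\GG\widetilde{f_y},f_y\rangle$ followed by an interchange of the order of integration, and your reduction of the alternative hypothesis via $\langle P_\GG f_y,\widetilde{f_y}\rangle=\overline{\langle P_\GG\widetilde{f_y},f_y\rangle}$ (reality being forced by the hypothesis) is fine.

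The one soft spot is your justification of the Fubini step in (ii), which is where the actual content lies, since the statement writes the iterated integral with $d\mu(y)$ innermost. The estimate you invoke (Cauchy--Schwarz in $x$ plus the frame bounds for $\{g_x\}$ and $\{\widetilde{g_x}\}$) only shows that for each fixed $y$ the $x$-integral of the absolute value is at most a constant times $\|f_y\|\,\|\widetilde{f_y}\|$; to apply Fubini--Tonelli you still need $\int_\Omega\|f_y\|\,\|\widetilde{f_y}\|\,d\mu(y)<\infty$, and this is not automatic even when $\mu(\Omega)<\infty$: for instance, taking $f_y=2^{n/2}e_n$ for $y$ in an interval $I_n\subset[0,1]$ of length $2^{-n}$ gives a continuous Parseval frame with $\int_0^1\|f_y\|^2\,dy=\infty$. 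So, as written, your interchange needs an additional integrability hypothesis (e.g.\ bounded frames and $\mu(\Omega)<\infty$, which is exactly the regime in which the paper later uses the lemma, in Lemma~\ref{mainlemma}, Theorem~\ref{main} and the applications); this is a looseness the paper itself shares, but in your write-up it deserves one explicit sentence stating under what integrability assumption the exchange of integrals is legitimate.
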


\subsection{Density of measures}
We define the analog of the Beurling densities replacing the counting measure with a general Borel measure. 

\begin{definition}
Let $\mu$ and $\nu$ be two Borel measures on the same metric space $(X,d)$. Assume that there exists large enough $R>0$ such that $\nu(B(a,R))>0$ for all $a\in X$. We define the upper $D^+_\nu (\mu)$ and the lower density  $D^-_\nu (\mu)$ of $\mu$ with respect to $\nu$ by
\[ D^+_\nu (\mu):=\limsup_{r\rightarrow \infty} \sup_{a\in X} \frac{\mu\left( B(a,r) \right)}{\nu\left( B(a,r) \right)}, \hspace{.5cm} D^-_\nu (\mu):=\liminf_{r\rightarrow \infty} \inf_{a\in X} \frac{\mu\left( B(a,r) \right)}{\nu\left( B(a,r) \right)}.\]

\end{definition}
The classical Beurling densities are recovered by taking the metric space $(X, d)$ to be $\R$ equipped with the usual Euclidean metric, the measure $\nu$ to be the Lebesgue measure $m$, and $\mu$ to be the counting measure of the sequence $\Lambda\subset \R$ whose density we are computing:

\[D^+(\Lambda):=\limsup_{r\rightarrow \infty} \sup_{a\in X} \frac{\#\left(\Lambda\cap B(a,r) \right)}{m\left( B(a,r) \right)}, \hspace{.5cm} D^-(\Lambda):=\liminf_{r\rightarrow \infty} \inf_{a\in X} \frac{\#\left(\Lambda\cap B(a,r) \right)}{m\left( B(a,r) \right)}.\]

\section{Density results for continuous frames}

The following comparison identity represents the basis of our approach. Let $\mathcal{F}, \mathcal{G} \subseteq \mathcal{H}$ be two closed subspaces of a given Hilbert space $\mathcal{H}$. Let $\left\{ f_x  \right\}_{x\in \left( X,d,\mu\right)}$ be a continuous frame for $\mathcal{F}$, and $\left\{ g_x  \right\}_{x\in \left( X,d,\nu\right)}$ be a continuous frame for $\mathcal{G}$. 

\begin{lem}\label{mainlemma} For any Borel set $\Omega\subseteq X$ the following equality holds
\begin{eqnarray*}
\int_\Om \llan P_{\mathcal{G}}\tld{f_y}, f_y \rran d\mu(y) &=& \int_\Om  \llan P_{\mathcal{F}} g_x, \tld{g_x} \rran  d\nu(x) -\int_{\Om^c} \int_\Om \llan g_x, f_y \rran \llan \tld{f_y}, \tld{g_x} \rran d\nu(x) d\mu(y) \\
&&+ \int_{\Om^c} \int_{\Om} \llan g_x, f_y \rran \llan \tld{f_y}, \tld{g_x} \rran d\mu(y) d\nu(x).
\end{eqnarray*}
\end{lem}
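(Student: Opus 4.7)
The plan is to expand both $\llan P_\GG \tld{f_y},f_y\rran$ and $\llan P_\FF g_x,\tld{g_x}\rran$ via the reproducing formula in Lemma \ref{easy}(i), integrate each over $\Om$, split $X=\Om\cup\Om^c$ in both expansions, identify the $\Om\times\Om$ pieces via Fubini, and collect terms.

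More precisely, applying Lemma \ref{easy}(i) to $\tld{f_y}\in \GG$ with the frame $\{g_x\}$ gives
$$P_{\GG}\tld{f_y}=\int_X \llan \tld{f_y},\tld{g_x}\rran g_x\,d\nu(x),$$
so pairing with $f_y$ and integrating in $y$ over $\Om$ yields
$$\int_\Om \llan P_{\GG}\tld{f_y},f_y\rran d\mu(y)=\int_\Om\!\int_X \llan g_x,f_y\rran\llan \tld{f_y},\tld{g_x}\rran d\nu(x)\,d\mu(y).$$
Splitting the inner integral as $\int_X=\int_\Om+\int_{\Om^c}$ produces two pieces; call them $I_{\Om,\Om}^{(\mu\nu)}$ and $I_{\Om,\Om^c}^{(\mu\nu)}$, where the superscript records the order in which the measures are iterated. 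Similarly, applying Lemma \ref{easy}(i) to $g_x\in\FF$ with the frame $\{f_y\}$ gives $P_\FF g_x=\int_X \llan g_x,f_y\rran \tld{f_y}\,d\mu(y)$, and pairing with $\tld{g_x}$ then integrating over $\Om$ in $\nu$ produces
$$\int_\Om \llan P_\FF g_x,\tld{g_x}\rran d\nu(x)=I_{\Om,\Om}^{(\nu\mu)}+I_{\Om,\Om^c}^{(\nu\mu)},$$
with the same integrand $\llan g_x,f_y\rran\llan \tld{f_y},\tld{g_x}\rran$.

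The crucial observation is Fubini on the doubly-restricted piece: $I_{\Om,\Om}^{(\mu\nu)}=I_{\Om,\Om}^{(\nu\mu)}$. Subtracting the two displayed identities, this common $\Om\times\Om$ contribution cancels, leaving
$$\int_\Om \llan P_{\GG}\tld{f_y},f_y\rran d\mu(y)-\int_\Om \llan P_\FF g_x,\tld{g_x}\rran d\nu(x)=I_{\Om,\Om^c}^{(\mu\nu)}-I_{\Om,\Om^c}^{(\nu\mu)}.$$
A second application of Fubini rewrites these mixed terms with the outer variable ranging over $\Om^c$: $I_{\Om,\Om^c}^{(\mu\nu)}=\int_{\Om^c}\!\int_\Om \llan g_x,f_y\rran\llan \tld{f_y},\tld{g_x}\rran d\mu(y)\,d\nu(x)$ and $I_{\Om,\Om^c}^{(\nu\mu)}=\int_{\Om^c}\!\int_\Om \llan g_x,f_y\rran\llan \tld{f_y},\tld{g_x}\rran d\nu(x)\,d\mu(y)$, which is exactly the form appearing in the statement.

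The one step that requires care is the justification of Fubini, since the identity hinges on freely interchanging $d\mu(y)$ and $d\nu(x)$ on the products $\Om\times\Om$ and $\Om\times\Om^c$. I expect this to follow from the continuous frame bounds together with Cauchy--Schwarz: $|\llan g_x,f_y\rran\llan \tld{f_y},\tld{g_x}\rran|\le \tfrac12(|\llan g_x,f_y\rran|^2+|\llan \tld{f_y},\tld{g_x}\rran|^2)$, and each of these two summands integrates finitely over $X\times X$ (restricted to a bounded Borel set at a time if needed) by the frame inequality applied to $f_y$ and to $\tld{g_x}$ in turn. Once absolute integrability is secured, the remainder of the argument is purely bookkeeping on the four rectangles $\Om\times\Om$, $\Om\times\Om^c$, and their transposes.
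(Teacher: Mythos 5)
Your proof is correct and takes essentially the same route as the paper's: the paper expands only the left-hand side via Lemma~\ref{easy}(i), splits the regions of integration, and then (by Fubini and a second use of Lemma~\ref{easy}) recognizes the full-$X$ integral in $y$ as $\int_\Om \llan P_{\FF} g_x,\tld{g_x}\rran d\nu(x)$ --- exactly the bookkeeping you carry out, just organized symmetrically by expanding both sides and cancelling the common $\Om\times\Om$ block. The only quibble is your closing integrability remark: the squared terms do \emph{not} in general integrate finitely over all of $X\times X$ (that double integral is typically infinite), though on the blocks actually used ($\Om\times\Om$ and $\Om\times\Om^c$ with the $\Om$-variable integrated in $\mu$, assuming $\mu(\Om)<\infty$ and bounded frames) the upper frame bound does give absolute integrability; the paper itself invokes Fubini with no justification, so this does not put you below its level of rigor.
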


\begin{proof} Using that $\{g_x\}_{x\in \left( X,d,\nu\right)}$ is a continuous frame for $\GG$, by Lemma~\ref{easy}, we obtain  

\begin{eqnarray*}
\int_\Om \llan P_{\mathcal{G}}\tld{f_y}, f_y \rran d\mu(y) &=& \int_\Om \int_X \llan g_x, f_y \rran \llan \tld{f_y}, \tld{g_x} \rran d\nu(x) d\mu(y)\\
&=& \int_X \int_\Om \llan g_x, f_y \rran \llan \tld{f_y}, \tld{g_x} \rran d\nu(x) d\mu(y) -\int_{\Om^c} \int_\Om \llan g_x, f_y \rran \llan \tld{f_y}, \tld{g_x} \rran d\nu(x) d\mu(y)\\
&&+ \int_\Om \int_{\Om^c} \llan g_x, f_y \rran \llan \tld{f_y}, \tld{g_x} \rran d\nu(x) d\mu(y)\\
\end{eqnarray*}
Using that $\{f_x\}_{x\in \left( X,d,\mu\right)}$ is a continuous frame for $\FF$ we can apply Lemma~\ref{easy} again (and Fubini's theorem) to obtain the desired equality.
\end{proof}

\begin{thm}
\label{main}
Let $\FF$ and $\GG$ be closed subspaces of a Hilbert space $\HH$. Let $\left\{ f_x  \right\}_{x\in \left( X,d,\mu\right)}$ and  $\left\{ g_x  \right\}_{x\in \left( X,d,\nu\right)}$ be continuous frames for $\mathcal{F}$ and $\mathcal{G}$ respectively satisfying the following localization property:
\begin{itemize}
\item[(L)] For any $\epsilon>0$ there exists $R>0$ such that for all $r\geq R$ and $B=B(a,r)$ we have

$$\abs{ \int_{B^c} \int_{B} \llan g_x, f_y \rran \llan \tld{f_y}, \tld{g_x} \rran d\mu(y) d\nu(x)-\int_{B^c} \int_{B} \llan g_x, f_y \rran \llan \tld{f_y}, \tld{g_x} \rran d\nu(x) d\mu(y)}<\epsilon(\mu+\nu)(B).$$  

\end{itemize}
Then the following hold:
\begin{itemize}
\item[(i)] If $ \llv \llan P_{\mathcal{F}} g_x, \tld{g_x} \rran \rrv \leq 1$ for all $x\in \supp{\left(\nu\right)}$, then
\begin{eqnarray*}
\displaystyle D^-_\mu(\nu) & \geq & \liminf_{r\rightarrow \infty} \inf_{a\in X} \frac{1}{\mu\left( B(a,r) \right)} \llv \int_{B(a,r)}  \llan P_{\mathcal{G}} \tld{f_y},f_y \rran  d\mu(y) \rrv , \\
\displaystyle D^+_\mu(\nu) & \geq & \limsup_{r\rightarrow \infty} \sup_{a\in X} \frac{1}{\mu\left( B(a,r) \right)} \llv \int_{B(a,r)}  \llan P_{\mathcal{G}} \tld{f_y},f_y \rran  d\mu(y) \rrv.
\end{eqnarray*} 
 \item[(ii)]  If $ \left\langle P_{\mathcal{G}}\tld{f_y}, f_y \right\rangle \geq 1$ for all $y\in \supp{\left(\mu\right)}$, then
\begin{eqnarray*}
\displaystyle  D^+_\nu(\mu) & \leq & \limsup_{r\rightarrow \infty} \sup_{a\in X} \frac{1}{\nu\left( B(a,r) \right)} \llv \int_{B(a,r)}  \llan P_{\mathcal{F}} g_x, \tld{g_x} \rran d\nu(x) \rrv , \\
\displaystyle D^-_\nu(\mu) & \leq & \liminf_{r\rightarrow \infty} \inf_{a\in X} \frac{1}{\nu\left( B(a,r) \right)} \llv \int_{B(a,r)}  \llan P_{\mathcal{F}} g_x, \tld{g_x} \rran d\nu(x) \rrv.
\end{eqnarray*} 

\item[(iii)] If $ \llv \llan P_{\mathcal{F}} g_x, \tld{g_x} \rran \rrv \leq 1$ for all $x\in \supp{\left(\nu\right)}$, and $ \llan P_{\mathcal{G}}\tld{f_y}, f_y \rran \geq 1$ for all $y\in \supp{\left(\mu\right)}$, then

\[ D^-_\mu(\nu) \geq 1 , \hspace{1cm} D^+_\nu(\mu) \leq 1.\]

\end{itemize}
\end{thm}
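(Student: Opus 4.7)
The plan is to apply Lemma~\ref{mainlemma} with $\Omega = B(a,r)$ and invoke the localization assumption (L) so as to produce the single quantitative comparison
\[
\llv \int_B \llan P_{\GG} \tld{f_y}, f_y \rran \, d\mu(y) - \int_B \llan P_{\FF} g_x, \tld{g_x} \rran \, d\nu(x) \rrv < \epsilon\bigl(\mu(B)+\nu(B)\bigr),
\]
valid for every ball $B = B(a,r)$ with $r \geq R(\epsilon)$. All three conclusions are then pure manipulations of this one estimate with suprema, infima, and $r \to \infty$.

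For part (i), I would use $\llv \llan P_{\FF} g_x, \tld{g_x} \rran \rrv \leq 1$ to estimate $\llv \int_B \llan P_{\FF} g_x, \tld{g_x} \rran \, d\nu(x) \rrv \leq \nu(B)$, feed this into the comparison above, and obtain
\[
L(B) := \frac{1}{\mu(B)}\llv \int_B \llan P_{\GG} \tld{f_y}, f_y \rran \, d\mu(y) \rrv \leq (1+\epsilon)\frac{\nu(B)}{\mu(B)} + \epsilon.
\]
Rearranging, $\frac{\nu(B)}{\mu(B)} \geq \frac{L(B) - \epsilon}{1 + \epsilon}$. Taking $\inf_a$ (respectively $\sup_a$), then $\liminf_{r \to \infty}$ (respectively $\limsup_{r \to \infty}$), and finally sending $\epsilon \to 0$, recovers both density bounds of (i).

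For part (ii), I would use $\llan P_{\GG} \tld{f_y}, f_y \rran \geq 1$ on $\supp \mu$ to obtain $\int_B \llan P_{\GG} \tld{f_y}, f_y \rran \, d\mu(y) \geq \mu(B)$, which inserted into the comparison gives
\[
R(B) := \frac{1}{\nu(B)}\llv \int_B \llan P_{\FF} g_x, \tld{g_x} \rran \, d\nu(x) \rrv \geq (1-\epsilon)\frac{\mu(B)}{\nu(B)} - \epsilon.
\]
This rearranges to $\frac{\mu(B)}{\nu(B)} \leq \frac{R(B) + \epsilon}{1 - \epsilon}$; applying $\sup_a$ (respectively $\inf_a$), then $\limsup_{r\to\infty}$ (respectively $\liminf_{r\to\infty}$), and sending $\epsilon \to 0$ yields the two upper bounds in (ii). Part (iii) then follows from (i) and (ii) without further work: under both hypotheses, $L(B) \geq 1$ and $R(B) \leq 1$ hold pointwise on every ball, so (i) gives $D^-_\mu(\nu) \geq 1$ and (ii) gives $D^+_\nu(\mu) \leq 1$.

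The only real subtlety is the order of the limiting operations: because the radius threshold $R(\epsilon)$ in (L) depends on $\epsilon$, one must take $\liminf_r$ or $\limsup_r$ \emph{before} letting $\epsilon \to 0$, and restrict to $\epsilon < 1$ so that $1-\epsilon > 0$. With that bookkeeping in place, no estimates beyond Lemma~\ref{mainlemma} and (L) are required — the theorem is essentially an algebraic unpacking of the comparison identity.
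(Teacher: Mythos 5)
Your proposal is correct and follows essentially the same route as the paper: apply Lemma~\ref{mainlemma} with $\Omega=B(a,r)$ together with (L) to get the single comparison estimate, use the pointwise hypotheses to bound one side, divide by $\mu(B)$ or $\nu(B)$, take $\inf/\sup$ over $a$ and the $r$-limits before sending $\epsilon\to 0$, and obtain (iii) as the combination of (i) and (ii). Your remark about the order of limits and keeping $\epsilon<1$ is exactly the bookkeeping the paper leaves implicit.
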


\begin{proof}
 
Let $\epsilon>0$. By Lemma~\ref{mainlemma} and property (L) we can find $R>0$ such that for all $r\geq R$ we have 
\[ \llv \int_{B(a,r)} \llan P_{\mathcal{G}}\tld{f_y}, f_y \rran d\mu(y)  -  \int_{B(a,r)}  \llan P_{\mathcal{F}} g_x, \tld{g_x} \rran  d\nu(x) \rrv \leq \epsilon(\mu(B(a,r))+\nu(B(a,r))). \]

Using the assumption in (i) we obtain 
\[\llv \int_B \llan P_{\mathcal{G}}\tld{f_y}, f_y \rran d\mu(y) \rrv \leq  (1+\epsilon)\nu(B(a,r))+\epsilon\mu(B(a,r)).\]

Therefore, \[\inf_{a\in X} \frac{1}{\mu(B(a,r))} \llv \int_B \llan P_{\mathcal{G}}\tld{f_y}, f_y \rran d\mu(y) \rrv \leq (1+\epsilon)\inf_{a\in X}\frac{\nu(B(a,r))}{\mu(B(a,r))}+\epsilon.\] This proves the inequality in (i).

The proof of (ii) is similar, and (iii) is just a combination of (i) and (ii). 
\end{proof}

The applicability of the previous result depends heavily on how easy is to verify the localization condition (L).  The next two results provide conditions which imply (L) and are simpler to verify. 

\begin{prop}\label{nd} Let $\FF$ and $\GG$ be closed subspaces of a Hilbert space $\HH$. Let $\left\{ f_x  \right\}_{x\in \left( X,d,\mu\right)}$ and  $\left\{ g_x  \right\}_{x\in \left( X,d,\nu\right)}$ be continuous frames for $\mathcal{F}$ and $\mathcal{G}$ respectively. If

\begin{itemize}
\item[(i)] Both continuous frames are bounded, i.e., $\displaystyle \sup_{x\in \supp(\mu)} \norm{f_x} < \infty $, and $\displaystyle \sup_{x\in \supp(\nu)} \norm{g_x} < \infty $. 

\item[(ii)] For any $\epsilon > 0$, there exists $R >0$ such that for all $a\in X$ and all $r\geq R$
 \begin{eqnarray*}
  &\displaystyle \int_{B(a,r)^c} \int_{B(a,r)} \left\lvert  \left\langle f_x , g_y \right\rangle  \right\rvert^2 d\nu(y)d\mu(x) \leq \varepsilon^2 \, \left(\mu+\nu\right) \left( B(a,r) \right),&\\
  &\displaystyle \int_{B(a,r)^c} \int_{B(a,r)} \left\lvert  \left\langle g_x , f_y \right\rangle  \right\rvert^2 d\mu(y)d\nu(x) \leq \varepsilon^2 \, \left(\mu+\nu\right) \left( B(a,r) \right).&
  \end{eqnarray*}
 \end{itemize}

Then the continuous frames satisfy the localization property (L) from Theorem~\ref{main}.

\end{prop}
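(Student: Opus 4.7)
The plan is to bound each of the two cross integrals in (L) individually by $O(\epsilon(\mu+\nu)(B))$, after which the triangle inequality delivers the claim. Writing them as
$$T_1 := \int_{B^c}\int_B \langle g_x, f_y\rangle\langle \widetilde{f_y}, \widetilde{g_x}\rangle\, d\mu(y)\,d\nu(x), \quad T_2 := \int_{B^c}\int_B \langle g_x, f_y\rangle\langle \widetilde{f_y}, \widetilde{g_x}\rangle\, d\nu(x)\,d\mu(y),$$
the natural move is Cauchy--Schwarz in the appropriate product measure on $B^c \times B$ (or $B \times B^c$), which splits each integrand into a factor $|\langle g_x, f_y\rangle|^2$ and a factor $|\langle \widetilde{f_y}, \widetilde{g_x}\rangle|^2$.

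For each $T_i$, the first factor is handled directly by hypothesis (ii): one of the two stated inequalities applies after the trivial symmetry $|\langle g_x, f_y\rangle| = |\langle f_y, g_x\rangle|$, contributing at most $\epsilon (\mu+\nu)(B)^{1/2}$. The second factor is the nontrivial part. The idea is to enlarge the unbounded region $B^c$ to all of $X$ and then invoke the upper frame bound of the relevant canonical dual frame. For $T_1$, for instance, $\{\widetilde{g_x}\}_{x \in X}$ is a continuous frame for $\GG$, so
$$\int_{B^c} |\langle \widetilde{f_y}, \widetilde{g_x}\rangle|^2\,d\nu(x) \;\le\; \int_X |\langle \widetilde{f_y}, \widetilde{g_x}\rangle|^2\,d\nu(x) \;\le\; C_{\widetilde{G}}\,\|\widetilde{f_y}\|^2.$$
Hypothesis (i), together with the boundedness of $S_F^{-1}$, gives $\sup_y \|\widetilde{f_y}\| \le \|S_F^{-1}\|\,\sup_y \|f_y\|<\infty$, so integrating the remaining variable $y$ over $B$ costs a factor at most $\mu(B)\le (\mu+\nu)(B)$. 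Therefore the second factor in the Cauchy--Schwarz bound is $O((\mu+\nu)(B)^{1/2})$ and $|T_1|\lesssim \epsilon(\mu+\nu)(B)$; a symmetric argument using the other inequality in (ii) and the dual frame $\{\widetilde{f_y}\}_{y \in X}$ bounds $|T_2|$. Rescaling $\epsilon$ yields (L).

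The main obstacle, and the reason Proposition~\ref{nd} is not entirely formal, is the factor $|\langle \widetilde{f_y}, \widetilde{g_x}\rangle|^2$: it cannot be controlled pointwise from (i) alone, and must be averaged against the frame inequality for one of the canonical dual frames over all of $X$. What makes this averaging affordable is precisely that, after the Cauchy--Schwarz split, the remaining variable in the dual factor is still confined to the bounded ball $B$, keeping the total cost proportional to $(\mu+\nu)(B)$ rather than blowing up with the radius.
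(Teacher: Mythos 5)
Your proposal is correct and follows essentially the same route as the paper: a Cauchy--Schwarz split of each cross term (your single application over the product measure on $B^c\times B$ is equivalent to the paper's two iterated applications), with hypothesis (ii) controlling the $|\langle g_x,f_y\rangle|^2$ factor and the dual-frame factor handled by extending the $B^c$-integral to all of $X$, invoking the upper frame bound of the canonical dual (formally applied to $P_\FF$ or $P_\GG$ of the vector, which only strengthens your estimate), and using (i) to bound the dual vectors uniformly so the remaining integral over $B$ costs only $(\mu+\nu)(B)$. No gaps; the choice of which dual frame's bound serves which term is just the mirror image of the paper's.
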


\begin{proof} To establish the localization property we will bound the expressions in the difference separately. Due to symmetry it is enough to concentrate on one of the terms.

Let $B$ be any ball in $X$.
By  Cauchy-Schwarz inequality we have
\[ \left\lvert  \int_{B^c}\int_B \left\langle g_x,f_y \right\rangle \left\langle \widetilde{f_y}, \widetilde{g_x} \right\rangle d\nu(x)d\mu(y)  \right\rvert \leq   \int_B \left( \int_{B^c} \left\lvert \left\langle g_x,f_y \right\rangle \right\rvert^2 d\mu(y) \right)^\frac{1}{2} \left( \int_{B^c} \left\lvert \left\langle \widetilde{f_y}, \widetilde{g_x} \right\rangle \right\rvert^2 d\mu(y) \right)^\frac{1}{2}  d\nu(x). \]
Applying Cauchy-Schwartz inequality again and using the fact that $\left\{ \widetilde{f_x}  \right\}_{x\in \left( X,d,\mu\right)}$ is a continuous frame for $\FF$ we obtain
\[ \left\lvert  \int_{B^c}\int_B \left\langle g_x,f_y \right\rangle \left\langle \widetilde{f_y}, \widetilde{g_x} \right\rangle d\nu(x)d\mu(y)  \right\rvert \lesssim  \left( \int_{B} \int_{B^c} \left\lvert \left\langle g_x,f_y \right\rangle \right\rvert^2  d\mu(y) d\nu(x) \int_B\norm{P_\FF \widetilde{g_x}}^2d\nu(x)\right)^\frac{1}{2}. \]
By condition (i) the continuous frame $\{ g_x  \}_{x\in \left( X,d,\nu\right)}$ is bounded and hence its canonical dual $\left\{ \widetilde{g_x}  \right\}_{x\in \left( X,d,\nu\right)}$ is also bounded. Therefore,
\[ \left\lvert  \int_{B^c}\int_B \left\langle g_x,f_y \right\rangle \left\langle \widetilde{f_y}, \widetilde{g_x} \right\rangle d\nu(x)d\mu(y)  \right\rvert \lesssim \nu(B)^\frac{1}{2}\left( \int_{B^c} \int_{B} \left\lvert \left\langle g_x,f_y \right\rangle \right\rvert^2   d\nu(x) d\mu(y) \right)^\frac{1}{2}. \]
Let $\epsilon>0$. Combining the previous inequality with (ii) we obtain $R>0$ such that for all $a\in X$ and $r>R$ we have
\[ \left\lvert  \int_{B(a,r)^c}\int_{B(a,r)} \left\langle g_x,f_y \right\rangle \left\langle \widetilde{f_y}, \widetilde{g_x} \right\rangle d\nu(x)d\mu(y)  \right\rvert \lesssim \epsilon(\mu(B(a,r))+\nu(B(a,r))).\] 
\end{proof}

We can further simplify the conditions in the previous proposition if we assume extra regularity of the indexing metric measure spaces. The extra assumption is the so called \emph{annular decay property}. 

\begin{definition} We will say that a Borel measure $\mu$ on a metric space $X$ satisfies the annular decay property if for any $a\in X$ and $\rho>0$, we have $\mu(B(a,r+\rho)\setminus B(a,r))=o(\mu(B(a,r)))$ as $r\to\infty$. 
\end{definition}

It is well-known that this condition is satisfied whenever the corresponding metric measure space is a doubling length space~\cite{Buc}. We want to note that the annular decay terminology that we use here is not  standard. We decided not to go into technicalities and took as a definition the simplest condition which is used in all of our results. 

\begin{prop}\label{Gro} Let $\FF$ and $\GG$ be closed subspaces of a Hilbert space $\HH$. Let $\left\{ f_x  \right\}_{x\in \left( X,d,\mu\right)}$ and  $\left\{ g_x  \right\}_{x\in \left( X,d,\nu\right)}$ be continuous frames for $\mathcal{F}$ and $\mathcal{G}$ respectively. Assume that the following conditions hold. 

\begin{itemize}
\item[(i)] Both continuous frames are bounded, i.e., $\displaystyle \sup_{x\in \supp(\mu)} \norm{f_x} < \infty $, and $\displaystyle \sup_{x\in \supp(\nu)} \norm{g_x} < \infty $. 

\item[(ii)]  
 \begin{eqnarray*}
  &\displaystyle \lim_{r\to\infty}\sup_{x\in X}\int_{B(x,r)^c} \left\lvert  \left\langle f_x , g_y \right\rangle  \right\rvert^2 d\nu(y)=0,&\\
  &\displaystyle \lim_{r\to\infty}\sup_{x\in X}\int_{B(x,r)^c} \left\lvert  \left\langle g_x , f_y \right\rangle  \right\rvert^2 d\mu(y)=0.&
  \end{eqnarray*}
  
 \item[(iii)] Both $\mu$ and $\nu$ satisfy the annular decay property.
  
 \end{itemize}

Then the continuous frames satisfy the localization property (L) from Theorem~\ref{main}.

\end{prop}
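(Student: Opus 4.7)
The plan is to reduce Proposition~\ref{Gro} to Proposition~\ref{nd} by deducing the two tail estimates in condition (ii) of Proposition~\ref{nd} from the stronger pointwise decay hypothesis (ii) together with the annular decay property (iii). By symmetry it suffices to establish the first of the two inequalities in condition (ii) of Proposition~\ref{nd}. Fix a ball $B=B(a,r)$. The natural move is to split the $y$-integration over $B$ into a \emph{core} $B(a,r-\rho)$ and a \emph{shell} $B(a,r)\setminus B(a,r-\rho)$, where $\rho$ will be chosen large (but fixed) first, and then $r$ will be sent to infinity.

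For the core, the triangle inequality shows that every $y\in B(a,r-\rho)$ and every $x\in B(a,r)^c$ satisfy $d(x,y)>\rho$, so $B(a,r)^c\subseteq B(y,\rho)^c$. By Fubini and the symmetry $|\langle f_x,g_y\rangle|^2=|\langle g_y,f_x\rangle|^2$, the core contribution is dominated by
\[
\int_{B(a,r-\rho)}\Bigl(\sup_{y\in X}\int_{B(y,\rho)^c}|\langle g_y,f_x\rangle|^2\,d\mu(x)\Bigr)d\nu(y)\le \delta(\rho)\,\nu(B(a,r)),
\]
where $\delta(\rho)\to0$ as $\rho\to\infty$ by hypothesis (ii). For the shell, I would use the upper frame bound for $\{f_x\}$ together with boundedness (i) to get $\int_X|\langle f_x,g_y\rangle|^2 d\mu(x)\le C\|g_y\|^2\le CM^2$ uniformly in $y$; multiplying by the shell measure gives the bound
\[
CM^2\,\nu\bigl(B(a,r)\setminus B(a,r-\rho)\bigr)=CM^2\cdot o\bigl(\nu(B(a,r))\bigr),\qquad r\to\infty,
\]
by the annular decay property (iii) applied to $\nu$. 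Given $\varepsilon>0$, I would first choose $\rho$ so that $\delta(\rho)\le\varepsilon^2/2$, then choose $R$ so that for $r\ge R$ the shell estimate contributes at most $(\varepsilon^2/2)\,\nu(B(a,r))$. Summing the two pieces gives the first tail bound in Proposition~\ref{nd}(ii), and the second follows by the same argument with the roles of $(f,\mu)$ and $(g,\nu)$ swapped.

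The only genuinely delicate point is extracting \emph{uniformity in the center} $a$ from the annular decay hypothesis, since the literal reading of the definition is pointwise in $a$. In the doubling length-space setting this uniformity is automatic, and it is this uniform version that is implicitly being used; I would state this up front so that the shell estimate produces an $o(1)$ that is independent of $a$. Once that is noted, everything else is a routine combination of a Cauchy--Schwarz-free direct split, the frame upper bound, and the two hypotheses.
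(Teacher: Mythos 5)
Your argument is correct and follows essentially the same route as the paper: reduce to Proposition~\ref{nd}, kill the far region using the uniform tail decay in (ii) (choosing $\rho$ first) and the thin annulus using the frame upper bound, the boundedness assumption (i), and the annular decay property, the only cosmetic difference being that you peel an inner shell $B(a,r)\setminus B(a,r-\rho)$ while the paper enlarges the ball and treats the outer annulus $B(a,r+\rho)\setminus B(a,r)$. Your caveat about uniformity in the center $a$ matches the paper's (implicitly uniform) use of annular decay, which is indeed automatic in the doubling length-space setting it cites.
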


\begin{proof} It is enough to show that the condition (ii) in Proposition~\ref{nd} holds. Due to symmetry we can concentrate on just one of the inequalities in this condition. 

Let $\epsilon>0$. By (ii) there exists $\rho>0$ such that for all $x\in X$ \[\int_{B(x,\rho)^c} \left\lvert  \left\langle g_x , f_y \right\rangle  \right\rvert^2 d\mu(y)<\epsilon.\] By (iii) there exists $R>0$ such that for all $a\in X$ and all $r>R$ we have $\mu(B(a, r+\rho)\setminus B(a,r))<\epsilon \mu(B(a,r))$.

If $x\in B(a,r)$ then $B(a,r+\rho)^c\subset B(x,\rho)^c$. Therefore, 
\[\int_{B(a,r)}\int_{B(a,r+\rho)^c} \abs{\left\langle g_x,f_y \right\rangle}^2 d\mu(y)d\nu(x)\leq \int_{B(a,r)}\int_{B(x,\rho)^c} \abs{\left\langle g_x,f_y \right\rangle}^2 d\mu(y)d\nu(x)<\epsilon\nu(B(a,r)).\]  
On the other hand, using (i) and the fact that $\{g_x\}_{x\in (X,d,\nu)}$ is a continuous frame we obtain
\[\int_{B(a,r)}\int_{B(a,r+\rho)\setminus B(a,r)} \abs{\left\langle g_x,f_y \right\rangle}^2 d\mu(y)d\nu(x)\lesssim \int_{B(a,r+\rho)\setminus B(a,r)}\norm{f_y}^2 d\mu(y)\lesssim \epsilon\mu(B(a,r)).\]  
Combining the last two inequalities we obtain the desired inequality.

\end{proof}

\begin{rem} The theorem continues to hold if we replace condition (iii) with the following one:

(iii') $\mu$ satisfies the annular decay property and for any $a\in X$ and $\rho>0$, we have $\nu(B(a,r+\rho)\setminus B(a,r))=o(\mu(B(a,r)))$ as $r\to\infty$.  

The proof is essentially the same. We will need to use this slightly modified condition in one of the results below.
\end{rem}

Specializing to the case of two normalized continuous Parseval frames we obtain a more precise result. Namely, if these frames satisfy the localization condition (L) above, then their indexing measures must have the same density. 

\begin{cor}\label{Parseval} Let $\left\{ f_x  \right\}_{x\in \left( X,d,\mu\right)}$ and  $\left\{ g_x  \right\}_{x\in \left( X,d,\nu\right)}$ be two continuous Parseval frames for $\HH$ which are normalized, i.e., $\norm{f_x}=1$ for all $x\in \supp(\mu)$ and $\norm{g_x}=1$ for all $x\in  \supp(\nu)$. If these two frames satisfy the localization condition (L), then $D^{\pm}_\mu(\nu)=D^{\pm}_\nu(\mu)=1$.

\end{cor}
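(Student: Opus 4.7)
The proof should be an essentially immediate consequence of Theorem \ref{main}(iii) applied twice, once in each direction. The key observation is that for normalized Parseval frames several of the ingredients in Theorem \ref{main} collapse to trivial quantities, so all three parts of (iii) actually hold with equality.

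First I would unpack what ``normalized continuous Parseval frame for $\HH$'' means in the notation of Theorem \ref{main}. Here $\FF=\GG=\HH$, hence $P_\FF=P_\GG=I$, and since both frames are Parseval their canonical duals coincide with the frames themselves: $\tilde{f}_x=f_x$ and $\tilde{g}_x=g_x$. Combined with $\|f_x\|=\|g_x\|=1$ on the respective supports, this gives
\[
\langle P_\GG \tilde{f}_y, f_y\rangle = \|f_y\|^2 = 1 \quad (y\in\supp\mu), \qquad \langle P_\FF g_x, \tilde{g}_x\rangle = \|g_x\|^2 = 1 \quad (x\in\supp\nu).
\]
In particular both hypotheses of Theorem \ref{main}(iii) are satisfied, and the conclusion yields
\[
D^-_\mu(\nu)\geq 1, \qquad D^+_\nu(\mu)\leq 1.
\]

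Next I would exploit the symmetry of the setup. The localization condition (L) is invariant under interchanging the pair $(\{f_x\},\mu)$ with $(\{g_x\},\nu)$, since swapping the two frames merely relabels the two iterated integrals whose difference appears in (L). Therefore Theorem \ref{main}(iii) also applies with the roles of $\mu$ and $\nu$ reversed, giving
\[
D^-_\nu(\mu)\geq 1, \qquad D^+_\mu(\nu)\leq 1.
\]

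Finally I would combine everything using the trivial inequality $D^-_\sigma(\tau)\leq D^+_\sigma(\tau)$, which holds for any pair of measures by definition. From $D^-_\mu(\nu)\geq 1$ and $D^+_\mu(\nu)\leq 1$ we read off $D^-_\mu(\nu)=D^+_\mu(\nu)=1$, and similarly $D^\pm_\nu(\mu)=1$. There is no real obstacle here; the only point requiring a line of justification is the symmetry of (L), which is a matter of inspection and Fubini-style relabeling rather than any additional estimate.
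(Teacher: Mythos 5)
Your proposal is correct and follows essentially the same route as the paper, which simply declares the corollary an immediate consequence of Theorem~\ref{main}; you have merely filled in the details (Parseval frames are self-dual, normalization makes the relevant inner products equal to $1$, apply part (iii), and then swap the roles of the two frames using the symmetry of condition (L) to upgrade the two inequalities to the four stated equalities). No gaps.
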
 

\begin{proof} This is an immediate consequence of Theorem~\ref{main}.

\end{proof}

\section{Application: Density theorem in RKHS.}

In this section we show how our results can be used to obtain general density theorems for sampling and interpolation sequences in a large class of RKHSs. The goal of density theorems is to provide necessary conditions for sampling and interpolation of sequences in terms of appropriate densities. 

Recall that every RKHS can be viewed as a triple $(\HH, X, K)$, consisting of a Hilbert space $\HH$, a set $X$, and a function $K:X\to \HH$ which is often called a reproducing kernel. In our paper we will restrict to a class of RKHS, satisfying few additional assumptions. We note that many classical examples of RKHS satisfy these assumptions. 

\begin{itemize}
\item[(A1)] The underlying set $X$ is a metric measure space $(X, d, \sigma)$ with a metric $d$ and a Borel measure $\sigma$. 
\item[(A2)] The Hilbert space $\HH$ is isometrically embedded into $L^2(X, \sigma)$ with the embedding $f\to \ip{f}{K_x}$. In other words $$\norm{f}^2=\int_X\abs{\ip{f}{K_x}}^2d\sigma(x)=\int_X\abs{f(x)}^2d\sigma(x),$$ for all $f\in \HH$.
\item[(A3)] The metric measure space $(X, d,\sigma)$ satisfies the annular decay property.

\end{itemize}

We now show how two very recent density results~\cite{Fuhr, Aaron} can be obtained as corollaries of our results. It should be noted that these two results use a different set of assumptions and none of them implies the other one. However, they both follow from our main theorem. 

\begin{thm}
\label{grochenig}
(Theorem 2.2, \cite{Fuhr}) Let $(\HH,X,K)$ be a RKHS satisfying (A1)-(A3). Assume that in addition the following conditions hold
\begin{enumerate}[i)]
 \item There exist constants $C_1, C_2 > 0$ such that for all $x\in X$
 $$C_1 \leq \norm{K_x}^2 \leq C_2.$$
\item (Weak localization of the kernel) For every $\epsilon > 0$ there exists $R > 0$ such that
$$ \sup_{x\in X} \int_{B(x,R)^c } \llv \ip{K_x}{K_y} \rrv^2 d\sigma(y) < \epsilon^2.$$
 \item (Homogeneous approximation property) If $\Gamma \subseteq X$ is a sequence such that $\{K_\gamma\}_{\gamma\in\Gamma}$ is a Bessel sequence for $\HH$, then for every $\epsilon > 0$ there exists $R > 0$ such that
$$ \sup_{x\in X} \sum_{\gamma \in \Gamma \cap B(x,R)^c } \llv \ip{K_x}{K_\gamma} \rrv^2  < \epsilon^2.$$
 \end{enumerate}
Then, the following results hold
\begin{enumerate}[1)]
 \item If $\{K_\gamma\}_{\gamma\in\Gamma}$ is a frame for $\HH$, then
 \begin{eqnarray*}
 D^-(\Gamma) & \geq & \liminf_{r \rightarrow \infty} \inf_{a \in X} \frac{1}{\sigma\left(B(a,r)\right)} \int_{B(a,r)} \norm{K_y}^2 d\sigma(y) \\
  D^+(\Gamma) & \geq & \limsup_{r \rightarrow \infty} \sup_{a \in X} \frac{1}{\sigma\left(B(a,r)\right)} \int_{B(a,r)} \norm{K_y}^2 d\sigma(y) \\
  \end{eqnarray*}
\item If $\{K_\gamma\}_{\gamma\in\Gamma}$ is a Riesz sequence for $\HH$, then
 \begin{eqnarray*}
D^-(\Gamma) & \leq & \liminf_{r \rightarrow \infty} \inf_{a \in X} \frac{1}{\sigma\left(B(a,r)\right)} \int_{B(a,r)} \norm{K_y}^2d\sigma(y) \\
D^+(\Gamma) & \leq & \limsup_{r \rightarrow \infty} \sup_{a \in X} \frac{1}{\sigma\left(B(a,r)\right)} \int_{B(a,r)}\norm{K_y}^2 d\sigma(y) \\
  \end{eqnarray*}
\end{enumerate}
\end{thm}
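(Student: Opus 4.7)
The plan is to apply Theorem~\ref{main} to two natural continuous frames built from the RKHS structure. By hypothesis (A2), the family $\{K_y\}_{y\in X}$ together with $\sigma$ is a continuous Parseval frame for $\HH$. The discrete family $\{K_\gamma\}_{\gamma\in\Gamma}$ equipped with the counting measure $\#_\Gamma$ is a frame for $\GG=\HH$ in Case~1 and a Riesz basis for $\FF_0:=\overline{\spn}\{K_\gamma\}$ in Case~2. The bulk of the argument is verifying the localization property (L) for this pair; once (L) is in hand, Theorem~\ref{main}(i) delivers Case~1 and Theorem~\ref{main}(ii) delivers Case~2.

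To verify (L), I invoke Proposition~\ref{Gro} together with the Remark following it, working in the labeling $\mu=\sigma$, $\nu=\#_\Gamma$ (which is legitimate since (L) is symmetric in the two frames). Condition (i) is immediate from $\|K_x\|^2\leq C_2$. The two off-diagonal decay integrals in condition (ii) are precisely the weak-localization assumption (ii) of the theorem and the homogeneous approximation property (iii) applied to $\{K_\gamma\}$, which is Bessel because it is either a frame or a Riesz sequence. For the annular decay requirement, $\sigma$ itself satisfies it by (A3), so according to the Remark it suffices to show that
\begin{equation*}
\#\bigl(\Gamma\cap (B(a,r+\rho)\setminus B(a,r))\bigr)=o\bigl(\sigma(B(a,r))\bigr)\quad\text{as }r\to\infty.
\end{equation*}

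This counting estimate is the main technical obstacle. Using weak localization I pick $R>0$ so that $\int_{B(x,R)^c}|\langle K_x,K_y\rangle|^2\,d\sigma(y)<C_1/2$ uniformly in $x\in X$; combined with $\|K_\gamma\|^2=\int_X|\langle K_\gamma,K_y\rangle|^2\,d\sigma(y)\geq C_1$, this forces $\int_{B(\gamma,R)}|\langle K_\gamma,K_y\rangle|^2\,d\sigma(y)\geq C_1/2$ for every $\gamma\in\Gamma$. Summing this lower bound over $\gamma\in\Gamma\cap A$ with $A:=B(a,r+\rho)\setminus B(a,r)$, using the inclusion $\bigcup_{\gamma\in A}B(\gamma,R)\subseteq B(a,r+\rho+R)\setminus B(a,r-R)$ (for $r>R$), and applying the Bessel inequality together with $\|K_y\|^2\leq C_2$, I obtain
\begin{equation*}
\tfrac{C_1}{2}\,\#(\Gamma\cap A)\leq \int_{B(a,r+\rho+R)\setminus B(a,r-R)}\sum_{\gamma\in\Gamma}|\langle K_y,K_\gamma\rangle|^2\,d\sigma(y)\lesssim \sigma\bigl(B(a,r+\rho+R)\setminus B(a,r-R)\bigr),
\end{equation*}
and the right-hand side is $o(\sigma(B(a,r)))$ by the annular decay of $\sigma$. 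This completes the verification of (L).

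With (L) in hand, I extract the two cases. In Case~1 set $\FF=\GG=\HH$, $\mu=\sigma$, $\nu=\#_\Gamma$; the hypothesis $|\langle P_\FF g_\gamma,\tld g_\gamma\rangle|\leq 1$ in Theorem~\ref{main}(i) reduces to $\langle K_\gamma,S^{-1}K_\gamma\rangle=\|S^{-1/2}K_\gamma\|^2\leq 1$, where $S$ is the frame operator of $\{K_\gamma\}$; this is the standard fact that vectors of any discrete Parseval frame have norm at most $1$ (test the Parseval identity for $\{S^{-1/2}K_\gamma\}$ against $S^{-1/2}K_\gamma$ itself). Since the Parseval identity for $\{K_y\}$ yields $\langle P_\GG\tld f_y,f_y\rangle=\|K_y\|^2$, the conclusion of (i) is exactly the claimed lower bounds on $D^\pm(\Gamma)$. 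In Case~2 swap the roles, taking $\FF=\FF_0$ with $\mu=\#_\Gamma$ and $\GG=\HH$ with $\nu=\sigma$; biorthogonality of the Riesz basis $\{K_\gamma\}$ to its canonical dual in $\FF_0$ gives $\langle P_\GG\tld f_\gamma,f_\gamma\rangle=\langle\tld K_\gamma,K_\gamma\rangle=1$, so the hypothesis of Theorem~\ref{main}(ii) holds, and the estimate $\langle P_\FF g_x,\tld g_x\rangle=\|P_{\FF_0}K_x\|^2\leq\|K_x\|^2$ turns its conclusion into the claimed upper bounds on $D^\pm(\Gamma)$.
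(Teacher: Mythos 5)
Your proof is correct and follows essentially the same route as the paper: reduce to Theorem~\ref{main} by verifying the localization property via Proposition~\ref{Gro} together with the remark (condition (iii')), with the same choices of $\FF,\GG,\mu,\nu$ in the two cases and the same verifications $\ip{K_\gamma}{\tld K_\gamma}\leq 1$ (frame case) and $\ip{\tld K_\gamma}{K_\gamma}=1$, $\|P_{\KK}K_x\|^2\leq\|K_x\|^2$ (Riesz case). The only differences are in level of detail, and both are to your credit: where the paper cites Lemmas 3.3 and 3.6 of \cite{Fuhr} for the annulus counting bound $\#\bigl(\Gamma\cap(B(a,r+\rho)\setminus B(a,r))\bigr)=o\bigl(\sigma(B(a,r))\bigr)$, you prove it directly (and correctly) from i), ii), (A3) and the Bessel property, and you make explicit the symmetry of (L) that the paper leaves implicit when it says case 2) is handled ``similarly.''
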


\begin{proof} 

Notice first that (A2) implies that $\{K_x\}_{x\in (X,d,\sigma)}$ is a continuous Parseval frame for $\HH$. Therefore, $\widetilde{K_x}=K_x$ for all $x\in X$. Denote by $n_\Gamma$ the counting measure of the sequence $\Gamma$, i.e., $n_\Gamma(A)=\#(A\cap\Gamma)$ for every Borel set $A\subseteq X$. 

We first prove 1). Observe that $\{K_\gamma\}_{\gamma\in \Gamma}$ being a frame implies that $\{K_\gamma\}_{\gamma\in (X,d, n_\Gamma)}$ is a continuous frame for $\HH$. Take $f_x=g_x=K_x, x\in X, \mu=\sigma, \nu=n_\Gamma, \FF=\GG=\HH$. To apply Theorem~\ref{main} we first need to show that $\{f_x\}_{x\in (X,d,\mu)}$ and $\{g_x\}_{x\in (X,d,\nu)}$ satisfy the localization property (L). For this we use Proposition~\ref{Gro}. Two conditions of this proposition follow easily:  (i) follows from i) and condition (ii) is a consequence of ii) and iii). The first part of (iii') also follows immediately from (A3). The second part of (iii') follows from i) and the fact that $\{K_\gamma\}_{\gamma\in \Gamma}$ is a Bessel sequence (for more details see Lemmas 3.3 and 3.6 in~\cite{Fuhr}). Thus, by Proposition~\ref{Gro}, the localization property is satisfied. Next, since  $\{K_\gamma\}_{\gamma\in \Gamma}$ is a frame for $\HH$ we have $\ip{g_\gamma}{\widetilde{g_\gamma}}=\ip{K_\gamma}{\widetilde{K_\gamma}}\leq 1$ for all $\gamma\in \Gamma=\supp(n_\Gamma)$. Applying Theorem~\ref{main} we obtain the desired inequalities. 

We next prove 2). Let $\KK=\overline{\text{span}}\{K_\gamma : \gamma\in \Gamma\}$. Since $\{K_\gamma\}_{\gamma\in \Gamma}$ is a Riesz sequence for $\HH$ it is also a frame for $\KK$. We now take $f_x=g_x=K_x, x\in X,  \mu=n_\Gamma, \nu=\sigma, \FF=\KK, \GG=\HH.$ Similarly as in 1) we can apply Proposition~\ref{Gro} and Theorem~\ref{main} to obtain the desired inequalities. 
\end{proof}

We next prove the main result from~\cite{Aaron}. The proof there was based on Landau's method and used concentration operators and their spectral properties. Below, as usual, we will say that a sequence $\Gamma$ is sampling (interpolating resp.) if the corresponding sequence of normalized reproducing kernels $\{k_\gamma\}_{\gamma\in \Gamma}$ is a frame (Riesz sequence resp.). 

\begin{thm}
\label{Aaron_masters}
Let $(\HH,X,K)$ be a RKHS satisfying (A1)-(A2). Let $k_x$ be the normalized reproducing kernel at $x$, i.e., $k_x=K_x/\norm{K_x}$ and let $\lambda$ be the ``normalized'' measure $d\lambda(x)=\norm{K_x}^2d\sigma(x)$. Assume, in addition, that $(X, d, \lambda)$ satisfies (A3) and the following conditions hold
\begin{enumerate}[i)]
 \item (Mean value property) For every $r>0$ there exists a constant $C_r> 0$ such that for all $a\in X$ and all $f\in \HH$
 $$\abs{\ip{f}{k_a}}^2 \leq C_r\int_{B(a,r)}\abs{\ip{f}{k_x}}^2d\lambda(x).$$
\item (Localization of the kernel) For every $\epsilon>0$ there exists $R>0$ such that for all $r\geq R$ we have 
$$ \sup_{a\in X} \int_{B(a, r)^c }\int_{B(a,r)} \llv \ip{k_x}{k_y} \rrv^2 d\lambda(x)d\lambda(y) < \epsilon\lambda(B(a,r)).$$
\end{enumerate}

Let $\Gamma$ be a separated (uniformly discrete) sequence, i.e., there exists $\delta>0$ such that $d(\gamma',\gamma'')>\delta$ for all $\gamma', \gamma''\in \Gamma$. Then
\begin{enumerate}
\item If $\Gamma$ is sampling then $D^-(\Gamma)\geq 1$.
\item If $\Gamma$ is interpolating then $D^+(\Gamma)\leq 1$.
\end{enumerate}

\end{thm}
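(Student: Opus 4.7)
The strategy is to apply Theorem~\ref{main}(iii) twice, each time with $f_x=g_x=k_x$ but with the roles of $\lambda$ and $n_\Gamma$ interchanged. First observe that (A2) together with $d\lambda=\|K_x\|^2\,d\sigma$ gives, for every $f\in\HH$,
\[ \int_X |\ip{f}{k_x}|^2\, d\lambda(x) \;=\; \int_X |\ip{f}{K_x}|^2\, d\sigma(x) \;=\; \|f\|^2, \]
so $\{k_x\}_{x\in(X,d,\lambda)}$ is a continuous Parseval frame for $\HH$, and is therefore self-dual.

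For part~(1), I would take $\mu=\lambda$, $\nu=n_\Gamma$, and $\FF=\GG=\HH$. Since $\Gamma$ is sampling, $\{k_\gamma\}_{\gamma\in\Gamma}$ is a frame for $\HH$. The hypothesis $\ip{P_\GG\widetilde{f_y}}{f_y}=\|k_y\|^2=1\ge 1$ is immediate. For the other hypothesis $|\ip{P_\FF g_\gamma}{\widetilde{g_\gamma}}|=|\ip{k_\gamma}{\widetilde{k_\gamma}}|\le 1$, let $S$ be the frame operator of $\{k_\gamma\}$ and $h_\gamma:=S^{-1/2}k_\gamma$ its canonical Parsevalization; then $\ip{k_\gamma}{\widetilde{k_\gamma}}=\|h_\gamma\|^2$, and since $\{h_\gamma\}$ is a Parseval frame the diagonal estimate $\|h_\gamma\|^4\le \sum_{\gamma'}|\ip{h_\gamma}{h_{\gamma'}}|^2=\|h_\gamma\|^2$ forces $\|h_\gamma\|^2\le 1$. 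For part~(2), I would instead take $\mu=n_\Gamma$, $\nu=\lambda$, $\FF=\KK:=\overline{\spn}\{k_\gamma:\gamma\in\Gamma\}$, and $\GG=\HH$; interpolation says $\{k_\gamma\}$ is a Riesz basis of $\KK$ whose canonical dual is biorthogonal, so $\ip{\widetilde{k_\gamma}}{k_\gamma}=1$, while $|\ip{P_\KK k_x}{k_x}|=\|P_\KK k_x\|^2\le 1$.

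The main obstacle is verifying the localization property~(L). I would deduce~(L) from Proposition~\ref{nd} rather than Proposition~\ref{Gro}: hypothesis~(ii) of the theorem is already stated in the two-variable integrated form matching Proposition~\ref{nd}(ii), and Proposition~\ref{nd} avoids any annular decay assumption on $n_\Gamma$, which is unavailable here. The task is to bound
\[ \sum_{\gamma\in\Gamma\cap B}\int_{B^c}|\ip{k_x}{k_\gamma}|^2\, d\lambda(x) \qquad\text{and}\qquad \sum_{\gamma\in\Gamma\cap B^c}\int_{B}|\ip{k_\gamma}{k_y}|^2\, d\lambda(y) \]
by a quantity $\le\epsilon^2(\lambda+n_\Gamma)(B)$, starting from the $d\lambda\,d\lambda$ version in hypothesis~(ii). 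Choosing $r_0<\delta/2$ where $\delta$ is the separation of $\Gamma$, the mean value property gives $|\ip{k_x}{k_\gamma}|^2\le C_{r_0}\int_{B(\gamma,r_0)}|\ip{k_x}{k_y}|^2\,d\lambda(y)$. Summing over $\gamma\in\Gamma\cap B$ and using disjointness of $\{B(\gamma,r_0)\}_{\gamma\in\Gamma}$ collapses the sum into an integral over $\bigcup_\gamma B(\gamma,r_0)\subseteq B(a,r+r_0)$. I would then split $B(a,r)^c=B(a,r+r_0)^c\cup (B(a,r+r_0)\setminus B(a,r))$: hypothesis~(ii) at radius $r+r_0$ controls the first piece, and the annular decay of $\lambda$ together with the trivial Parseval bound $\int_X|\ip{k_x}{k_y}|^2\,d\lambda(y)=1$ controls the thin annulus. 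The symmetric estimate for the sum over $\Gamma\cap B^c$ is handled in the same way, contracting the inner region to $B(a,r-r_0)$.

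Once~(L) is established, Theorem~\ref{main}(iii) applied in the two setups above yields $D^-_\lambda(n_\Gamma)\ge 1$ and $D^+_\lambda(n_\Gamma)\le 1$, respectively. Under the natural identification $D^\pm(\Gamma):=D^\pm_\lambda(n_\Gamma)$ (the Beurling densities with the normalized measure $\lambda$ as reference), these are precisely the assertions of~(1) and~(2).
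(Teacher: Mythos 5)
Your proof is correct and follows essentially the same route as the paper: the same choices of $(f_x,g_x,\mu,\nu,\FF,\GG)$ in both parts, verification of the localization property via Proposition~\ref{nd} using the mean value property, the separation of $\Gamma$, the Parseval identity and the annular decay of $\lambda$, and finally Theorem~\ref{main}(iii). The only differences are cosmetic: you split the outer region into a far zone and a thin annulus rather than the inner one as the paper does, and you spell out the standard frame facts $\abs{\ip{k_\gamma}{\widetilde{k_\gamma}}}\le 1$ and the biorthogonality of the dual of a Riesz sequence, which the paper simply invokes.
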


\begin{rem} Note that condition ii) says that the trace norm and the Hilbert-Schmidt norm of the so called concentration operator $T_{B(a,r)}f=\int_{B(a,r)}\ip{f}{k_x}k_xd\lambda(x)$ are asymptotically close (see~\cite{Aaron}).   

\end{rem}

\begin{proof} Notice first that (A2) implies that $\{k_x\}_{x\in (X,d,\lambda)}$ is a continuous Parseval frame for $\HH$ and $\norm{k_x}=1$ for all $x\in X$. Therefore, $\widetilde{k_x}=k_x$ for all $x\in X$. Again let $n_\Gamma$ be the counting measure of the sequence $\Gamma$, i.e., $n_\Gamma(A)=\#(A\cap\Gamma)$ for every Borel set $A\subseteq X$.     

We first prove (1). Take $f_x=g_x=k_x, x\in X, \mu=\lambda, \nu=n_\Gamma, \FF=\GG=\HH$. Then clearly $\norm{f_x}=\norm{g_x}=1$ for all $x\in X$. To apply Theorem~\ref{main} we first need to show that $\{f_x\}_{x\in (X,d,\mu)}$ and $\{g_x\}_{x\in (X,d,\nu)}$ satisfy the localization property (L). For this we use Proposition~\ref{nd}. Condition (i) in this proposition follows from the fact the continuous frames are normalized. To prove condition (ii) we need to do some work. We concentrate on proving the first inequality the other one being similar. Let $\epsilon>0$. Using the localization of the kernel ii) we get $R>0$ such that for all $r>R$ and all $a\in X$ we have 
$$\int_{B(a, r)^c }\int_{B(a,r)} \llv \ip{k_x}{k_y} \rrv^2 d\lambda(x)d\lambda(y) < \epsilon \lambda(B(a,r)).$$ Using that $\Gamma$ is separated and the mean value property i) we have

\begin{eqnarray*}
\int_{B\left( a,r \right)^c} \int_{B\left( a,r \right)} \llv\llan k_x, k_y \rran\rrv^2 d\nu(y)d\mu(x) &=& \int_{B\left( a,r \right)^c} \left( \sum_{\gamma \in B\left( a,r \right) \cap \Gamma} \llv\llan k_x, k_\gamma \rran\rrv^2 \right) d\lambda(x) \\
&\lesssim& \int_{B\left( a,r \right)^c} \left( \sum_{\gamma \in B\left( a,r \right) \cap \Gamma} \int_{B\left( \gamma, \frac{\delta}{2} \right)} \llv\llan k_x, k_z \rran\rrv^2 d\lambda(z) \right) d\lambda(x)\\
&\leq& \int_{B\left( a,r \right)^c} \int_{B\left( a, r+ \frac{\delta}{2} \right)} \llv\llan k_x, k_z \rran\rrv^2 d\lambda(z)d\lambda(x)
\end{eqnarray*}  
We split the last double integral into two parts $ \int_{B( a, r )^c} \int_{B( a, r)}+\int_{B(a,r)^c} \int_{B(a, r+ \frac{\delta}{2})\setminus B(a,r)}$. The first part is obviously bounded by $\epsilon \lambda(B(a,r))$. The second part can be bounded by \[\int_{X}\int_{B(a, r+ \frac{\delta}{2})\setminus B(a,r)} \llv\llan k_x, k_z \rran\rrv^2 d\lambda(z)d\lambda(x)=\int_{B(a, r+ \frac{\delta}{2})\setminus B(a,r)} d\lambda(z)<\epsilon\lambda(B(a,r)).\] Combining them we get the desired estimate. The other estimate can be obtained in a similar way. Therefore, by Proposition~\ref{nd}, the localization property is satisfied. Next, since  $\{k_\gamma\}_{\gamma\in \Gamma}$ is a frame for $\HH$ we have $\ip{g_\gamma}{\widetilde{g_\gamma}}=\ip{k_\gamma}{\widetilde{k_\gamma}}\leq1$ for all $\gamma\in \Gamma=\supp(n_\Gamma)$. Finally, $\ip{f_x}{\widetilde{f_x}}=\ip{k_x}{k_x}= 1$ for all $x\in X$.  Applying part (iii) of Theorem~\ref{main} we obtain $D^-(\Gamma)\geq 1$. 

We next prove (2). Let $\KK=\overline{\text{span}}\{k_\gamma : \gamma\in \Gamma\}$. Since $\{k_\gamma\}_{\gamma\in \Gamma}$ is a Riesz sequence for $\HH$ it is also a frame for $\KK$. We now take $f_x=g_x=k_x, x\in X,  \mu=n_\Gamma, \nu=\lambda, \FF=\KK, \GG=\HH.$ Similarly as in (1) we can apply Proposition~\ref{nd} and Theorem~\ref{main} to obtain $D^+(\Gamma)\leq 1.$

\end{proof}

\section{Application: Density result for embeddings} The purpose of this short section is to provide an example that shows our main theorem can be used to prove results not immediately related to sampling and interpolation. We prove (under certain technical assumptions) that if a given reproducing kernel is isometrically embedded into two different $L^2$-spaces, then the corresponding ``normalized''  measures must have equal densities. More precisely, let $(\HH,X,K)$ be a RKHS satisfying (A1)-(A2). Let $k_x$ be the normalized reproducing kernel at $x$, i.e., $k_x=K_x/\norm{K_x}$ and let $\mu$ be the corresponding ``normalized'' measure $d\mu(x)=\norm{K_x}^2d\sigma(x)$ such that the metric space $(X, d, \mu)$ satisfies (A3). Assume also that there exists another Borel measure $\alpha$ on $X$ such that $$\norm{f}^2=\int_X\abs{\ip{f}{K_x}}^2d\alpha(x)=\int_X\abs{f(x)}^2d\alpha(x),$$ for all $f\in \HH$. Denote by $\nu$ the corresponding ``normalized'' measure $d\nu(x)=\norm{K_x}^2d\alpha(x)$ and assume that the 
corresponding metric measure space $(X, d, \nu)$ satisfies (A3). Assume also that 
\begin{itemize}
\item[(i)] For every $\epsilon > 0$ there exists $R > 0$ such that
$$ \sup_{x\in X}\frac{1}{\norm{K_x}^2} \int_{B(x,R)^c } \llv \ip{K_x}{K_y} \rrv^2 d\sigma(y) < \epsilon^2.$$
 \item[(ii)] For every $\epsilon > 0$ there exists $R > 0$ such that
$$ \sup_{x\in X} \frac{1}{\norm{K_x}^2}\int_{B(x,R)^c } \llv \ip{K_x}{K_y} \rrv^2d\alpha(x) < \epsilon^2.$$
 \end{itemize}

\begin{thm}  $D^{\pm}_\mu(\nu)=D^{\pm}_\nu(\mu)=1$. 
\end{thm}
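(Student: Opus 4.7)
The plan is to reduce the conclusion to an immediate application of Corollary~\ref{Parseval} after verifying the hypotheses of Proposition~\ref{Gro} for the two normalized continuous frames $\{k_x\}_{x\in(X,d,\mu)}$ and $\{k_x\}_{x\in(X,d,\nu)}$.

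First I would observe that assumption (A2), together with the analogous isometric embedding into $L^2(X,\alpha)$, gives two continuous Parseval frames for $\HH$: namely $\{K_x\}_{x\in(X,d,\sigma)}$ and $\{K_x\}_{x\in(X,d,\alpha)}$. A change of variables via $k_x=K_x/\norm{K_x}$, $d\mu=\norm{K_x}^2 d\sigma$, $d\nu=\norm{K_x}^2 d\alpha$ then yields
\[
\norm{f}^2=\int_X |\ip{f}{k_x}|^2\,d\mu(x)=\int_X|\ip{f}{k_x}|^2\,d\nu(x),\qquad f\in\HH,
\]
so that $\{k_x\}_{x\in(X,d,\mu)}$ and $\{k_x\}_{x\in(X,d,\nu)}$ are normalized continuous Parseval frames for $\HH$. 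In particular their canonical duals equal themselves, and condition (i) of Proposition~\ref{Gro} is automatic because $\norm{k_x}=1$.

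Next I would verify condition (ii) of Proposition~\ref{Gro} with $f_x=g_x=k_x$. The key identity is
\[
|\ip{K_x}{K_y}|^2=\norm{K_x}^2\norm{K_y}^2\,|\ip{k_x}{k_y}|^2,
\]
which gives
\[
\frac{1}{\norm{K_x}^2}\int_{B(x,R)^c}|\ip{K_x}{K_y}|^2\,d\sigma(y)=\int_{B(x,R)^c}|\ip{k_x}{k_y}|^2\,d\mu(y),
\]
and similarly with $\alpha$ in place of $\sigma$ and $\nu$ in place of $\mu$. Thus hypotheses (i) and (ii) of the theorem translate directly into the two limits required by Proposition~\ref{Gro}~(ii). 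Condition (iii) of that proposition is precisely the assumption that both $(X,d,\mu)$ and $(X,d,\nu)$ satisfy (A3).

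With Proposition~\ref{Gro} in hand, the localization property (L) of Theorem~\ref{main} holds for the pair of Parseval frames $\{k_x\}_{x\in(X,d,\mu)}$ and $\{k_x\}_{x\in(X,d,\nu)}$. Since both are normalized, Corollary~\ref{Parseval} immediately yields $D^{\pm}_\mu(\nu)=D^{\pm}_\nu(\mu)=1$. There is no real obstacle here; the only bookkeeping step is the rescaling identity above, which absorbs the weights $\norm{K_x}^2$ into the measures and matches hypotheses (i)--(ii) of the theorem with the localization requirements of Proposition~\ref{Gro}.
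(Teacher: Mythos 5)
Your proposal is correct and follows essentially the same route as the paper: verify localization via Proposition~\ref{Gro} (with the rescaling identity $|\ip{K_x}{K_y}|^2=\norm{K_x}^2\norm{K_y}^2|\ip{k_x}{k_y}|^2$ absorbing the weights into $\mu$ and $\nu$, so that hypotheses (i)--(ii) become exactly condition (ii) of that proposition and the (A3) assumptions give condition (iii)), and then apply Corollary~\ref{Parseval} to the two normalized continuous Parseval frames $\{k_x\}_{x\in(X,d,\mu)}$ and $\{k_x\}_{x\in(X,d,\nu)}$. The paper's proof is just a terser statement of the same argument, so your write-up simply makes the bookkeeping explicit.
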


\begin{proof} This is a direct consequence of the Corollary~\ref{Parseval} applied to the continuous Parseval frames $\{k_x\}_{x\in (X, d, \mu)}$ and $\{k_x\}_{x\in (X, d, \nu)}$. The fact that these are localized follows from (i), (ii) and Proposition~\ref{Gro}. 

\end{proof}

\section{Other Applications} Several applications of theorem~\ref{grochenig} are given in~\cite{Fuhr}. As we showed above, this theorem is a consequence of our more general result and therefore all of these applications follow from our result as well. In this section we list a few additional applications of our main result. Most of these were originally proved using either Landau's spectral approach or Ramanthan-Steger comparison principle. The goal of this section is to show that our result/method can also be used to derive these results. We would like to stress that verifying the localization condition often represents a significant technical difficulty and seems to be very much case dependent.   

\subsection{Density theorem for Gabor frames} The density theorem for Gabor frames is one of the fundamental results of
time-frequency analysis with a very rich history (see~\cite{Hei} for a comprehensive treatment of the history of this problem). The most general irregular case was settled in several steps in~\cite{Lan, RS, CDH}. In fact, one of the first and main successes of the Ramanthan-Steger method~\cite{RS} was that it showed that the density theorem for Gabor frames doesn't require any (decay) conditions on the generating function.  We show here that our method can be also used to prove this result. We briefly outline the proof since many of the details are well-known. 

Let $h\in L^2(\R^n)$ and let $\rho:\R^2\to L^2(\R^n)$ be the usual projective representation of the Heisenberg group given by $\rho(p,q)f(x)=e^{2\pi i q x}f(x-p)$. Assume that $\Lambda\subset \R^{2n}$ is an uniformly discrete (separated) sequence such that the set $\{\rho(p, q)h : (p,q)\in \Lambda\}$ forms a frame for $L^2(\R^n)$. The famous density theorem for Gabor frames says that in this case the lower Beurling density of $\Lambda$ cannot be greater than $1$. We give a new proof of this result using our Theorem~\ref{main} and Proposition~\ref{Gro}. Let $\phi_0(x)=2^{n/4}e^{-\pi\abs{x}^2}$ be the standard Gaussian function.  We will use the following two continuous frames for $L^2(\R^n)$. The first being $f_{(p,q)}=\rho(p,q)h$ with the indexing metric measure space $\R^{2n}$ equipped with the Euclidean metric and the counting measure $n_\Lambda$. The second being $g_{(p,q)}=\rho(p,q)\phi_0$ with the indexing metric measure space $\R^{2n}$ equipped with the Euclidean metric and the Lebesque measure. As 
above, to apply Theorem~\ref{main} we need to establish the localization property (L) for which we use the Proposition~\ref{Gro}. Condition (i) is clearly true. The second condition follows essentially from the proof of Lemma 1 in~\cite{RS}. Finally, condition (iii') follows from the fact that $\Lambda$ is uniformly discrete. Therefore, we can apply Theorem~\ref{main} and obtain the desired lower density estimate $D^-(\Lambda)\geq 1$.

\subsection{Sampling and Interpolation in de Branges spaces.} It was proved in~\cite{MNO} that appropriate Beurling density conditions can be used to give necessary and sufficient conditions for sampling and interpolation in a large class of de~Branges spaces. Their proof of the necessity was based on the Ramanathan-Steger comparison principle. We show here how our method can be used to obtain the same conclusion. Recall that each de~Branges space is generated by a Hermitte-Biehler class function $E$ (entire function satisfying $\overline{E(\bar{z})}>E(z)$ for $z\in \C_+$) and consists of all entire functions $F:\C\to\C$ such that $$\int_\R\abs{\frac{F(x)}{E(x)}}^2dx<\infty,$$ the last expression defining the norm in the space ($\norm{f}^2$ to be precise). The phase function $\phi$ is defined from the polar representation of $E$ on the real line $E(x)=\abs{E(x)}e^{-\phi(x)}$ and is taken to be increasing. This phase function can be used to define a metric measure space by taking $X=\R$, $d(x,y)=\abs{\phi(x)-\
phi(y)}$, and $d\lambda(x)=\phi'(x)dx$. The main assumption used in~\cite{MNO} is that the measure $\lambda$ (just defined) is doubling  (which implies the annular decay property on $\lambda$). It is not hard to check that any such de~Branges space satisfies all the conditions of Theorem~\ref{Aaron_masters} (all of the technical points are essentially contained in~\cite{MNO}). Applying this theorem we obtain exactly the density conditions from~\cite{MNO}.    

\subsection{Sampling and Interpolation in weighted Fock spaces.} It was proved in~\cite{OrtSei} that in the weighted case, just as in the classical (unweighted) case, sampling and interpolation sequences in the Fock space can be completely characterized by appropriate Beurling densities. We show how to use our method to obtain the necessary part of their result. Recall that the weighted Fock space is defined as the space of entire functions $f:\C\to \C$ that satisfy the following integrability condition 
$$\int_\C\abs{f(z)}^2e^{-2\phi(z)}dm(z)<\infty,$$ where $m$ is the Lebesgue measure on $\C$ and $\phi$ is a subharmonic function satisfying the condition $\Delta\phi\simeq 1$ on $\C$. Due to the last condition on $\phi$ we can renormalize our space and use the norm $$\norm{f}^2=\int_\C\abs{f(z)}^2e^{-2\phi(z)}\Delta\phi(z),$$ without changing the sampling and interpolation sequences in the space. With this renormalization, we get a RKHS on which we can apply Theorem~\ref{Aaron_masters}. Notice that for this space the normalized measure $d\lambda$ is just $\Delta\phi$. Due to Lemma 1 and Theorem B in~\cite{OrtSei} we can apply Theorem~\ref{Aaron_masters} to obtain the desired density results. Note that our result does not give the strict inequality condition which is more closely dependent on the nature of the weighted Fock space.

\subsection{Sampling and Interpolation in the space of bandlimited functions associated to the Hankel transform.} Just as in the case of the Paley-Wiener space (with discontinuous spectrum) one can define a space of bandlimited functions associated to the Hankel transform. Here the Hankel transform plays the role that the Fourier transform plays in the Paley-Wiener space. More precisely, for a given measurable subset $S$ of the positive real axis $(0,\infty)$ consider the space $\BB_\alpha(S)$ of all functions $f\in L^2(0,\infty)$ whose Hankel transform 
$$\int_0^\infty f(t) (xt)^{1/2}J_\alpha(xt)dt,$$ is supported in $S$, where $\alpha>1/2$ and $J_\alpha$ is the classical Bessel function of order $\alpha$. It was proved in~\cite{AB} that just as in the Paley-Wiener space appropriate Beurling densities can be used to obtain necessary conditions for sampling and interpolation in $\BB_\alpha(S)$. Using the estimates from~\cite{AB} (Lemma 3) it is not hard to see that all the conditions of Theorem~\ref{Aaron_masters} apply and hence our Theorem~\ref{Aaron_masters} can be used to obtain the density results from~\cite{AB}.

\section{Final remark} Our main result (Theorem~\ref{main}) can be slightly extended in the following way. Consider a collection of vectors $\{f_x\}_{x\in (X, d, \mu)}$ in a Hilbert space $\HH$ which is not necessarily a continuous frame for any subspace of $\HH$. Let $\FF$ be the closed span of $\{f_x\}_{x\in X}$. Assume that we can find a collection $\{\tilde{f_x}\}_{x\in X}$ such that $$ P_{\mathcal{F}}f = \int_X \llan f,\tld{f_x} \rran f_x d\mu(x) = \int_X \llan f,f_x \rran \tld{f_x} d\mu(x). $$ In the case when $\{f_x\}_{x\in (X, d, \mu)}$ is a continuous frame for $\FF$ the canonical dual continuous frame has this property. However, even if $\{f_x\}_{x\in (X, d, \mu)}$ is a minimal system (so not necessarily a continuous frame) we can still take $\{\tilde{f_x}\}_{x\in X}$ to be its biorthogonal system and the projection formula will still hold. Our main result (with essentially the same proof) continues to hold under this slightly weaker initial assumption. This observation can be used to obtain 
necessary density bound for uniformly minimal systems in quite general RKHSs.

\begin{bibdiv} \begin{biblist}

\bib{Aaron}{thesis}{
   author={Ramirez Flores, A. E.},
   title={Density condition for sampling sets in framed Hilbert
spaces},
   date={2015},
   pages={1--35}
   school={Clemson University}
}

\bib{AB}{article}{
   author={Abreu, L. D.},
   author={Bandeira, B. S.},
   title={Landau's necessary density conditions for the Hankel transform.},
   journal={J. Funct. Anal.},
   volume={262},
   date={2012},
   number={4},
   pages={1845--1866}
}

\bib{AAG}{article}{
   author={Ali, S. T.},
   author={Antoine, J-P.},
   author={Gazeau, J-P.}
   title={Continuous frames in Hilbert space},
   journal={Ann. Physics},
   volume={222},
   date={1993},
   number={1},
   pages={1--37}
}

\bib{Bar}{article}{
   author={Baranov, A.},
   title={Completeness and Riesz bases of reproducing kernels in model subspaces},
   journal={Int. Math. Res. Notices,},
   date={2006},
   pages={1--34}
}

\bib{BCHL1}{article}{
  author={Balan, R.} 
  author={Casazza, P. G.} 
  author={Heil, C.}
  author={Landau, Z.},
  title={Density, overcompleteness, and localization of frames. I. Theory},
  journal={J. Fourier Anal. Appl.},
  volume={12},
  number={2},
  pages={105--143},
  year={2006},
  publisher={Springer}
}

\bib{BCHL2}{article}{
  author={Balan, R.} 
  author={Casazza, P. G.} 
  author={Heil, C.}
  author={Landau, Z.},
  title={Density, overcompleteness, and localization of frames. II. Gabor Systems},
  journal={J. Fourier Anal. Appl.},
  volume={12},
  number={3},
  pages={307--344},
  year={2006},
  publisher={Springer}
}

\bib{Beu}{book}{
   author={Beurling, A.},
   title={The collected works of Arne Beurling},
   series={Harmonic Analysis},
   volume={2},
   publisher={Birkhauser},
   place={Boston},
   date={1989},
}

\bib{Buc}{article}{
   author={Buckley, S. M.},
   title={Is the maximal function of a Lipschitz function continuous?},
   journal={Ann. Acad. Sci. Fenn. Math.,},
   volume={24},
   date={1999},
   number={2},
   pages={519--528}
}

\bib{CDH}{article}{
  author={Christensen, O.} 
  author={Deng, B.}
  author={Heil, C.},
  title={Density of Gabor frames},
  journal={Appl. Comput. Harmon. Anal.},
  volume={7},
  pages={292--304},
  year={1999},
}

\bib{Fuhr}{article}{
   author={Fuhr, H.},
   author={Grochenig, K.},
   author={Haimi, A.},
   author={Klotz, A.},
   author={Romero, J. L.},
   title={Density of sampling and interpolation in reproducing kernel Hilbert spaces.},
   eprint={https://arxiv.org/abs/1607.07803},
   status={preprint},
   date={2016}
   pages={1--27}
   
}

\bib{FR}{article}{
  author={Fornasier, M.},
  author={Rauhut, H.},
  title={Continuous frames, function spaces, and discretization problem},
  journal={J. Fourier Anal. Appl.},
  volume={88},
  number={3},
  pages={64--89},
  year={1990}
}

\bib{Gro1}{article}{
   author={Gr{\"o}chenig, K.},
   title={Localization of frames, Banach frames, and the invertibility of the frame operator},
   journal={J. Fourier Anal. Appl.},
   volume={10},
   date={2004},
   number={},
   pages={105--132}}
   
 \bib{Hei}{article}{
   author={Heil, C.},
   title={History and Evolution of the Density Theorem for Gabor Frames},
   journal={J. Fourier Anal. Appl.},
   volume={13},
   date={2007},
   number={2},
   pages={113--166}}  
   
\bib{HNP}{article}{
   author={Hruschev, S.},
   author={Nikolskii, N.},
   author={Pavlov, B.},
   title={Unconditional bases of exponentials and of reproducing kernels,},
   journal={Lecture Notes in Math.},
   volume={864},
   date={1978},
   pages={214--335}
}

\bib{Kah}{article}{
   author={Kahane, J. -P.},
   title={Sur les fonctions moyenne-p\'eriodiques born\'ees},
   journal={Ann. Inst. Fourier (Grenoble)},
   volume={7},
   date={1957},
   pages={293--314}
}

\bib{Lan}{article}{
   author={Landau, H. J.},
   title={Necessary density conditions for sampling and interpolation of certain entire functions},
   journal={Acta Math.},
   volume={117},
   date={1967},
   number={},
   pages={37--52}}

\bib{MNO}{article}{
   author={Marzo, J.},
   author={Nitzan, S.},
   author={Olsen, J.},
   title={Sampling and interpolation in de Branges spaces with doubling phase},
   journal={J. Anal. Math.},
   volume={117},
   date={2012},
   number={1},
   pages={365--395}
   
}

\bib{NO}{article}{
  author={Nitzan, S.},
  author={Olevskii, A.},
  title={Revisiting Landau's density theorems for Paley-Wiener spaces.},
  journal={C. R. Acad. Sci. Paris, Ser. I Math.},
  volume={350},
  number={9--10},
  pages={509--512},
  year={2012}
}

\bib{OrtSei}{article}{
  author={Ortega-Cerda, J.},
  author={Seip, K.},
  title={Beurling-type density theorems for weighted $L^{p}$ spaces of entire functions.},
  journal={J. Anal. Math.},
  volume={75},
  number={1},
  pages={247--266},
  year={1998}
} 

\bib{OS}{article}{
   author={Ortega-Cerda, J.},
   author={Seip, K.},
   title={Fourier frames,},
   journal={Ann. of  Math.},
   volume={155},
   date={2002},
   pages={789--806}
}

\bib{RS}{article}{
  author={Ramanathan, J.},
  author={Steger, T.},
  title={Incompleteness of sparse coherent states.},
  journal={Appl. Comput. Harmon. Anal.},
  volume={2},
  number={2},
  pages={148--153},
  year={1995}
}

\bib{Seip}{book}{
  title={Interpolation and sampling in spaces of analytic functions},
  author={Seip, K.},
  volume={33},
  year={2004},
  publisher={American Mathematical Society}
}

\bib{Seip1}{article}{
   author={Seip, K.},
   title={Beurling type density theorems in the unit disc},
   journal={Invent. Math.},
   volume={113},
   date={1993},
   pages={21--39}
}
  
\bib{Seip2}{article}{
   author={Seip, K.},
   title={Density theorems for sampling and interpolation in the Bargmann-Fock space I},
   journal={J. Reine, Angew. Math.},
   volume={429},
   date={1992},
   pages={91--106}
}

\bib{SW}{article}{
   author={Seip, K.},
   author={Wallsten, R.},
   title={Density theorems for sampling and interpolation in the Bargmann-Fock space II},
   journal={J. Reine, Angew. Math.},
   volume={429},
   date={1992},
   pages={107--113}
}

\end{biblist} \end{bibdiv}

\end{document}